\newtheorem{theorem}{Theorem}
\newtheorem{proposition}[theorem]{Proposition}
\theoremstyle{remark}
\theoremstyle{definition}
\numberwithin{theorem}{section} \numberwithin{equation}{section}
\numberwithin{example}{section}
\title{Eulerian series as Modular forms revisited}
\author{Eric T. Mortenson}
\begin{document}

\date{24 January 2014}

\subjclass[2010]{11B65, 11F11, 11F27}

\keywords{Appell--Lerch sums, Eulerian forms, $q$-hypergeometric series}

\begin{abstract}
Recently, Bringmann, Ono, and Rhoades employed harmonic weak Maass forms to prove results on Eulerian series as modular forms.   By changing the setting to Appell--Lerch sums, we shorten the proof of one of their main theorems. In addition we discuss connections to recent work of Kang.
\end{abstract}

\address{Max-Planck-Institut f\"ur Mathematik, Vitvatsgasse 7, 53111 Bonn, Germany}
%\address{$^{\dagger}$corresponding author}
\email{etmortenson@gmail.com}
\maketitle
\setcounter{section}{-1}

\section{Definitions and Introduction}

 Let $q$ be a complex number, $0<|q|<1$, and define $\mathbb{C}^*:=\mathbb{C}-\{0\}$.  We recall
\begin{gather}
(x)_n=(x;q)_n:=\prod_{i=0}^{n-1}(1-q^ix), \ \ (x)_{\infty}=(x;q)_{\infty}:=\prod_{i\ge 0}(1-q^ix),\notag\\
 j(x;q):=(x)_{\infty}(q/x)_{\infty}(q)_{\infty}=\sum_{n=-\infty}^{\infty}(-1)^nq^{\binom{n}{2}}x^n,\label{equation:theta-series}
\end{gather}
where in the last line the equivalence of product and sum follows from Jacobi's triple product identity.  The following are special cases of the above definition.  Let $a$ and $m$ be integers with $m$ positive.  Define
\begin{gather*}
J_{a,m}:=j(q^a;q^m), \ \ \overline{J}_{a,m}:=j(-q^a;q^m), \ {\text{and }}J_m:=J_{m,3m}.
\end{gather*}

We will use the following definition of an Appell-Lerch sum.  Using the notation of \cite{HM}:
\begin{equation}
m(x,q,z):=\frac{1}{j(z;q)}\sum_{r=-\infty}^{\infty}\frac{(-1)^rq^{\binom{r}{2}}z^r}{1-q^{r-1}xz}.\label{equation:mxqz-def}
\end{equation}
Appell--Lerch sums are useful in studying $q$-hypergeometric series \cite{HM, Mo1, Zw2}. In original work of Lovejoy and Osburn \cite{LO, LO2}, the results of Hickerson and the author on relating Hecke-type double sums to Appell--Lerch sums \cite{HM} were instrumental in determining mock theta behaviour of multisum $q$-hypergeometric series.  One finds traces of Appell--Lerch sums throughout the Lost Notebook \cite{RLN}, where many identities express Eulerian series in terms of what are essentially $m(x,q,z)$ functions.  For Ramanujan's sixth order mock theta functions $\phi(q)$ and $\sigma(q)$,  one finds slightly rewritten \cite{AH, RLN}:
\begin{align*}
\phi(q)&:=\sum_{n= 0}^{\infty}\frac{(-1)^nq^{n^2}(q;q^2)_n}{(-q)_{2n}} =2m(q,q^3,-1), \ \sigma(q):=\sum_{n= 0}^{\infty}\frac{q^{\binom{n+2}{2}}(-q)_n}{(q;q^2)_{n+1}}=-m(q^2,q^6,q).
\end{align*}

 Appell--Lerch sums satisfy several well-known functional equations and identities, which we collect in the form of a proposition, see for example \cite{HM}.  Here, the term generic means that the variables do not cause singularities in the Appell--Lerch sums or in the quotients of the theta functions.
 \begin{proposition}  For generic $x,z,z_0,z_1\in \mathbb{C}^*$ 
 {\allowdisplaybreaks \begin{subequations}
\begin{equation}
m(x,q,z)=m(x,q,qz),\label{equation:m-fnq-z}
\end{equation}
\begin{equation}
m(qx,q,z)=1-xm(x,q,z),\label{equation:m-fnq-x}
\end{equation}
\begin{equation}
m(x,q,z_1)-m(x,q,z_0)=\frac{z_0J_1^3j(z_1/z_0;q)j(xz_0z_1;q)}{j(z_0;q)j(z_1;q)j(xz_0;q)j(xz_1;q)}.\label{equation:m-change-z}
\end{equation}
\end{subequations}}
\end{proposition}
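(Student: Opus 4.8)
The plan is to establish the three identities separately. The first two, \eqref{equation:m-fnq-z} and \eqref{equation:m-fnq-x}, are elementary manipulations of the defining series \eqref{equation:mxqz-def}; the change-of-$z$ formula \eqref{equation:m-change-z} is the substantial one, and I would prove it by a Liouville-type argument on the complex torus $\C^*/q^{\Z}$. Two facts are used repeatedly and follow at once from the product and sum expansions in \eqref{equation:theta-series}: the quasi-periodicity $j(qw;q)=-w^{-1}j(w;q)$ and the inversion formula $j(w^{-1};q)=-w^{-1}j(w;q)$.

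For \eqref{equation:m-fnq-z}: replace $z$ by $qz$ in \eqref{equation:mxqz-def}, use $\binom r2+r=\binom{r+1}2$ to absorb the extra $q^r$ coming from $(qz)^r$ into the theta exponent, then reindex $r\mapsto r-1$; the factor $-z^{-1}$ produced by the shift cancels the one in $j(qz;q)=-z^{-1}j(z;q)$, leaving exactly $m(x,q,z)$. For \eqref{equation:m-fnq-x}: in $m(qx,q,z)$ the summand denominators are $1-q^rxz$, and the splitting $\tfrac1{1-q^rxz}=1+\tfrac{q^rxz}{1-q^rxz}$ breaks the series into $j(z;q)^{-1}\sum_r(-1)^rq^{\binom r2}z^r$, which is $1$ by Jacobi's triple product, plus a remainder that, after the same exponent identity and reindexing, equals $-x\,m(x,q,z)$.

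For \eqref{equation:m-change-z} I would fix $x$ and $z_0$ and treat both sides as meromorphic functions of $z:=z_1$. Using $j(qw;q)=-w^{-1}j(w;q)$ one checks that each side is invariant under $z\mapsto qz$ (for the theta quotient the four monomial factors cancel in pairs; for the left-hand side this is \eqref{equation:m-fnq-z}), so both descend to elliptic functions on $\C^*/q^{\Z}$. Under the genericity hypotheses each side then has, on this torus, at most two poles, both simple, located at $z\equiv 1$ and $z\equiv x^{-1}$: for $m(x,q,z)$ from the prefactor $1/j(z;q)$ and from the unique singular summand $1-q^{r-1}xz$ of \eqref{equation:mxqz-def}, and for the theta quotient from the factors $j(z;q)$ and $j(xz;q)$ in its denominator. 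I would then compute the residue at $z\equiv x^{-1}$ on both sides. For $m(x,q,z)$ only the $r=1$ summand is singular, and using $j(x^{-1};q)=-x^{-1}j(x;q)$ the residue comes out to $-1/(x\,j(x;q))$; for the theta quotient, expanding $j(xz;q)\sim -x(z-x^{-1})J_1^3$ near $z=x^{-1}$ and evaluating the surviving factors via the quasi-periodicity and inversion of $j$ gives the same value. Hence the difference of the two sides is an elliptic function on $\C^*/q^{\Z}$ with at most one simple pole; since a nonconstant elliptic function has at least two poles counted with multiplicity, the difference is constant. Evaluating at $z=z_0$ makes both sides vanish --- the left-hand side trivially, the right-hand side through the numerator factor $j(z/z_0;q)$, which equals $j(1;q)=0$ --- so the constant is $0$.

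The main obstacle is the pole-and-residue bookkeeping for \eqref{equation:m-change-z}: using the genericity assumptions to guarantee that the only poles on the torus are the two claimed simple ones, and carrying out the residue of the theta quotient while correctly tracking the various $-w^{-1}$ factors thrown off by $j(qw;q)$ and $j(w^{-1};q)$; the rest is routine reindexing. One could instead match residues directly at $z\equiv 1$, but that is tantamount to the classical partial-fraction identity $\sum_{r}(-1)^rq^{\binom r2}/(1-q^{r-1}x)=-J_1^3/j(x;q)$, and routing through $z\equiv x^{-1}$ together with Liouville's theorem sidesteps the need for it.
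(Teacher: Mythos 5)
Your proposal is correct: the series manipulations for \eqref{equation:m-fnq-z} and \eqref{equation:m-fnq-x} are exactly right, and for \eqref{equation:m-change-z} the ellipticity check, the identification of the two simple poles at $z\equiv 1$ and $z\equiv x^{-1}$, the matching residues $-1/(x\,j(x;q))$ at $z\equiv x^{-1}$, and the vanishing at $z=z_0$ all verify. The paper itself gives no proof of this proposition (it is quoted from \cite{HM}), and your argument is essentially the standard one found there and in Zwegers' thesis, so there is nothing to flag.
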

\noindent Although one does not find anything as explicit as (\ref{equation:m-fnq-z})--(\ref{equation:m-change-z}) in \cite{RLN}, one does find many specializations of the identities.  For example, (\ref{equation:m-change-z}) specializes to the following Lost Notebook relation  for the above sixth orders  \cite[$(0.19)_R$]{AH}:
\begin{equation}
\phi(q^2)+2\sigma(q)=\prod_{n\ge 1}(1+q^{2n-1})^2(1-q^{6n})(1+q^{6n-3})^2.\label{equation:6th-id}
\end{equation}
Another example is  \cite[Entry $12.4.1$]{ABI} which is a combination of (\ref{equation:m-fnq-flip}) and (\ref{equation:m-change-z}).  One can also view (\ref{equation:6th-id}) as a linear combination of Eulerian series which essentially yields a weight $1/2$ weakly holomorphic modular form.  

Ramanujan also expanded more involved Eulerian series in terms of Appell--Lerch like sums.  We recall \cite[Proposition $2.6$]{Mo1}:
\begin{align}
\sum_{n= 0}^{\infty}\frac{(-1)^nq^{n^2}(q;q^2)_{n}}{(x;q^2)_{n+1}(q^2/x;q^2)_{n}}&=m(-x,q,-1)+\frac{J_{1,2}^2}{2j(x;q)},\label{equation:RLNid2}\\
\Big ( 1-\frac{1}{x}\Big )\sum_{n=0}^{\infty}\frac{(-1)^n(q;q^2)_nq^{(n+1)^2}}{(xq;q^2)_{n+1}(q/x;q^2)_{n+1}}&=m(-x,q,-1)-\frac{J_{1,2}^2}{2j(x;q)},\label{equation:RLNid4}
\end{align}
where both are rewritten equations of \cite{RLN}  proved in Andrews \cite{AM}.  Identities such as (\ref{equation:RLNid2})--(\ref{equation:RLNid4}) and the techniques of \cite{HM} are useful in finding additional $q$-hypergeometric and bilateral $q$-hypergeometric series with (mixed) mock modular behaviour \cite{Mo1}.  

In Ramanujan's last letter to Hardy, he included mock theta functions of orders three, five, and seven.  The third orders could each be written as a special case of the generalised Lambert series $g(x,q).$ For example, take the third order $f(q)$:
\begin{equation*}
f(q):=\sum_{n= 0}^{\infty}\frac{q^{n^2}}{(-q)_n^2}=2-2g(-1,q),
\end{equation*}
where, see \cite[Proposition $3.2$] {HM}:
\begin{align*}
g(x,q)&:=x^{-1}\Big ( -1 +\sum_{n=0}^{\infty}\frac{q^{n^2}}{(x)_{n+1}(q/x)_{n}} \Big )=\sum_{n=0}^{\infty}\frac{q^{n(n+1)}}{(x)_{n+1}(q/x)_{n+1}}\\
&\ \ =-x^{-1}m(q^2x^{-3},q^3,x^2)-x^{-2}m(qx^{-3},q^3,x^2).
\end{align*}
Not until the discovery of the Lost Notebook and subsequent work of Andrews, Garvan, and Hickerson \cite{AG, H5, H7} was it realized that the fifth and seventh orders could each be expressed as the sum of a $g(x,q)$ and a single quotient of theta functions.  These expressions for the fifth orders were the so-called mock theta conjectures.  For the fifth order $f_0(q)$:
\begin{equation}
f_0(q):=\sum_{n= 0}^{\infty}\frac{q^{n^2}}{(-q)_n}=-2q^2g(q^2,q^{10})+\frac{J_{5,10}J_{2,5}}{J_1}.\label{equation:f0-conj}
\end{equation}

In \cite{HM}, Hickerson and the author developed and refined the notion of expanding Ramanujan's classical mock theta functions in terms of building blocks.   We showed that if one allows repetition in $x$  in (\ref{equation:mxqz-def}), one can always write these functions entirely in terms of $m(x,q,z)$'s.  If one does not allow for duplication in $x$, one can adjust the $z$'s such that there is only a single quotient of theta functions.  For $f_0(q)$ \cite{HM}:
\begin{align}
f_0(q)&=m(q^{14},q^{30},q^{14})+m(q^{14},q^{30},q^{29})+q^{-2}m(q^{4},q^{30},q^4)+q^{-2}m(q^{4},q^{30},q^{19}) \notag\\
&=2m(q^{14},q^{30},q^4)+2q^{-2}m(q^{4},q^{30},q^4)+\frac{J_{5,10}J_{2,5}}{J_1}.\label{equation:mf0-expansion}
\end{align}
Such expansions are of interest when studying the partial theta function duals \cite{Mo1}.  To prove such expressions, we introduced the following Appell--Lerch sum identity.

\begin{theorem} \label{theorem:msplit-general-n} \cite{HM} For generic $x,z,z'\in \mathbb{C}^*$
{\allowdisplaybreaks \begin{align*}
m(&x,q,z) = \sum_{r=0}^{n-1} q^{{-\binom{r+1}{2}}} (-x)^r m\big(-q^{{\binom{n}{2}-nr}} (-x)^n, q^{n^2}, z' \big)\notag\\
& + \frac{z' J_n^3}{j(xz;q) j(z';q^{n^2})}  \sum_{r=0}^{n-1}
\frac{q^{{\binom{r}{2}}} (-xz)^r
j\big(-q^{{\binom{n}{2}+r}} (-x)^n z z';q^n\big)
j(q^{nr} z^n/z';q^{n^2})}
{ j\big(-q^{{\binom{n}{2}}} (-x)^n z', q^r z;q^n\big )}.
\end{align*}}
\end{theorem}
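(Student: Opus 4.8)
The plan is to run the standard argument for Appell--Lerch identities. Regard both sides as meromorphic functions of $x\in\C^*$, with $z,z'$ fixed and generic, and write the right-hand side as $R(x)=\Sigma(x)+T(x)$, where $\Sigma(x)=\sum_{r=0}^{n-1}\sigma_r(x)$ with $\sigma_r(x):=q^{-\binom{r+1}{2}}(-x)^r\,m\big(-q^{\binom n2-nr}(-x)^n,q^{n^2},z'\big)$, and $T(x)=\dfrac{z'J_n^3}{j(xz;q)\,j(z';q^{n^2})}\,S(x)$ with $S(x)$ the displayed finite sum of theta quotients. I will show that $R$ satisfies the same functional equation in $x$ as $m(x,q,z)$, that $D:=R-m(\,\cdot\,,q,z)$ is then holomorphic on $\C^*$, and that this forces $D\equiv0$.

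\emph{Step 1 (functional equation).} For $1\le r\le n-1$ the substitution $x\mapsto qx$ sends $\sigma_r(x)$ to exactly $-x\,\sigma_{r-1}(x)$, since $q^{-\binom{r+1}{2}}(-qx)^r=q^{-\binom r2}(-x)^r$ and $-q^{\binom n2-nr}(-qx)^n=-q^{\binom n2-n(r-1)}(-x)^n$; hence $\Sigma(qx)=\sigma_0(qx)+x\,\sigma_{n-1}(x)-x\Sigma(x)$. Writing $X_0:=-q^{-\binom n2}(-x)^n$ and noting $-q^{\binom n2+n}(-x)^n=q^{n^2}X_0$, relation \eqref{equation:m-fnq-x} at base $q^{n^2}$ gives $\sigma_0(qx)=1-X_0\,m(X_0,q^{n^2},z')$, while a direct check gives $x\,\sigma_{n-1}(x)=X_0\,m(X_0,q^{n^2},z')$; so the two boundary terms add to $1$ and $\Sigma(qx)=1-x\Sigma(x)$. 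Applying the theta shifts $j(qy;q)=-y^{-1}j(y;q)$ and $j(q^ny;q^n)=-y^{-1}j(y;q^n)$ to the factors of each summand of $S$ shows $S(qx)=z^{-1}S(x)$, whence $T(qx)=-xT(x)$. Adding, $R(qx)=1-xR(x)$, which is exactly \eqref{equation:m-fnq-x} for $m(x,q,z)$; so $D(qx)=-xD(x)$.

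\emph{Step 2 (holomorphy of $D$).} From \eqref{equation:mxqz-def}, $m(x,q,z)$ has only simple poles, on $q^{\Z}z^{-1}$, with $\operatorname{Res}_{x=z^{-1}}m(x,q,z)=1/j(z;q)$. For generic $z,z'$ the poles of $\Sigma$ lie on the $n$ orbits $\mathcal{O}_a=-\zeta_n^{a}x_{*}q^{\Z}$ ($0\le a<n$, with $\zeta_n$ a primitive $n$th root of unity and $x_{*}^{n}=-q^{-\binom n2}/z'$), and those of $T$ on $q^{\Z}z^{-1}\cup\bigcup_a\mathcal{O}_a$; all of these orbits are pairwise disjoint and avoid $q^{\Z}$. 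Hence $D$ can be singular only on $q^{\Z}z^{-1}\cup\bigcup_a\mathcal{O}_a$, and it suffices to cancel its residues on each $\mathcal{O}_a$. At the representative $x_0\in\mathcal{O}_a$ with $-q^{\binom n2}(-x_0)^nz'=1$ only $\sigma_0$ is singular, and in $S$ one has $j\big(-q^{\binom n2+r}(-x_0)^nzz';q^n\big)=j(q^rz;q^n)$, which cancels a denominator factor; computing the two residues via $j(w;q)\sim(1-w)J_1^3$ and $j(w;q^n)\sim(1-w)J_n^3$ as $w\to1$ reduces the required cancellation to the theta $n$-dissection identity
\[
j(w;q)=\sum_{r=0}^{n-1}q^{\binom r2}(-w)^r\,j\big((-1)^{n+1}q^{\binom n2+nr}w^n;q^{n^2}\big),
\]
which follows by splitting $\sum_m(-1)^mq^{\binom m2}w^m$ according to $m\bmod n$ and using $\binom{nk+r}{2}=n^2\binom k2+k\big(\binom n2+nr\big)+\binom r2$. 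Therefore $D$ is holomorphic on $\C^*$ off $q^{\Z}z^{-1}$, with at worst a simple pole there.

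\emph{Step 3 (conclusion).} Since $D(qx)=-xD(x)$, the product $D(x)\,j(x;q)$ is $q$-periodic, so it descends to a meromorphic function on $\C^*/q^{\Z}$ with at worst one simple pole; as a non-constant elliptic function has degree at least $2$, $D(x)j(x;q)$ is constant, and it vanishes on $q^{\Z}$ (where $j=0$ while $D$ is regular), so it is identically $0$. Hence $D\equiv0$, which is the asserted identity — in particular the putative pole of $D$ at $q^{\Z}z^{-1}$ is absent. I expect the only genuinely delicate point to be the exponent bookkeeping in Step 1; the residue cancellations in Step 2 collapse to the elementary dissection above, and Step 3 is routine.
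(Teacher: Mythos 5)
Your argument is correct, but note at the outset that this paper does not prove Theorem \ref{theorem:msplit-general-n} at all: it is imported verbatim from \cite{HM}, so there is no in-text proof to compare against. Judged on its own, your proof checks out. I verified the exponent bookkeeping in Step 1 (indeed $\sigma_r(qx)=-x\sigma_{r-1}(x)$, the boundary terms combine to $1$ via \eqref{equation:m-fnq-x} at base $q^{n^2}$ with $X_0=-q^{-\binom{n}{2}}(-x)^n$, and $S(qx)=z^{-1}S(x)$); the residue computation in Step 2 (at $x_0$ with $-q^{\binom{n}{2}}(-x_0)^nz'=1$ one gets $\operatorname{Res}_{x_0}\Sigma=z'x_0/(n\,j(z';q^{n^2}))$ while the cancellation $j(-q^{\binom{n}{2}+r}(-x_0)^nzz';q^n)=j(q^rz;q^n)$ reduces $\operatorname{Res}_{x_0}T$ to exactly minus that, granted your mod-$n$ dissection of $j(w;q)$, which is itself correct); and the degree argument on $\C^*/q^{\Z}$ in Step 3. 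Your route is genuinely different from the one the machinery of \cite{HM} suggests and (as I recall) uses there: dissect the bilateral series \eqref{equation:mxqz-def} defining $m(x,q,z)$ according to the summation index modulo $n$, recognize each piece as an Appell--Lerch sum in base $q^{n^2}$ with an $r$-dependent third argument, and then normalize all third arguments to the common $z'$ via \eqref{equation:m-change-z}; that derivation is constructive and produces the theta quotients directly, whereas yours verifies the identity a posteriori but avoids the bookkeeping of recombining $n$ applications of \eqref{equation:m-change-z} into the stated single sum. Two small points you should make explicit to be airtight: regularity of $D$ on an entire orbit $\mathcal{O}_a$ follows from regularity at the single representative $x_0$ via $D(qx)=-xD(x)$, and the passage from generic $x,z,z'$ to all admissible values is by meromorphic continuation. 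Neither is a gap, just a sentence each.
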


Independently, Gordon and McIntosh \cite{GM} expanded mock thetas in terms of multiple building blocks, but  not in a comprehensive manner like (\ref{equation:mf0-expansion}).  Also, no result like Theorem \ref{theorem:msplit-general-n} was obtained.  In fact, Theorem \ref{theorem:msplit-general-n} and the $m(x,q,z)$ expansions in \cite{HM}  enabled Lovejoy and Osburn \cite{LO1} to give a short proof of conjectured  identities for the tenth orders  \cite{GM}.  They also note that since all classical mock theta functions can be written in terms of Appell-Lerch sums (see \cite{HM}), one can easily prove similar identities for $2$nd, $3$rd, $6$th, and $8$th orders, see \cite[$(5.2)$, $(3.12)$, $(5.10)$]{GM} and the top of page $125$ in \cite{GM}.

\section{A few more technical details}
We list a few more technical details \cite{HM}.  Some useful theta function identities are
\begin{gather}
\overline{J}_{0,1}=2\overline{J}_{1,4}=\frac{2J_2^2}{J_1},  \overline{J}_{1,2}=\frac{J_2^5}{J_1^2J_4^2},   J_{1,2}=\frac{J_1^2}{J_2},   \overline{J}_{1,3}=\frac{J_2J_3^2}{J_1J_6}, 
 J_{1,4}=\frac{J_1J_4}{J_2}.
\end{gather}
We state additional general identities for the theta function:
\begin{subequations}
\begin{equation}
j(q^n x;q)=(-1)^nq^{-\binom{n}{2}}x^{-n}j(x;q), \ \ n\in\mathbb{Z},\label{equation:1.8}
\end{equation}
\begin{equation}
j(x;q)=j(q/x;q)=-xj(x^{-1};q)\label{equation:1.7},
\end{equation}
\begin{equation}
j(x;q)={J_1}j(x,qx;q^2)/{J_2^2} \label{equation:1.10}
\end{equation}
\begin{equation}
j(z;q)=j(-qz^2;q^4)-zj(-q^3z^2;q^4),\label{equation:jsplit}
\end{equation}
\begin{equation}
j(x^2;q^2)=j(x;q)j(-x;q)/{J_{1,2}},\label{equation:1.12}
\end{equation}
\end{subequations}
We also recall the reciprocal of Jacobi's theta product:
\begin{equation}
\sum_{n=-\infty}^{\infty}\frac{(-1)^nq^{\binom{n+1}{2}}}{1-q^nz}=\frac{J_1^3}{j(z;q)}.\label{equation:RJTP}
\end{equation}
Finally we note for generic $x,y,z\in \mathbb{C}^*$:
\begin{equation}
j(x;q)j(y;q)=j(-xy;q^2)j(-qx^{-1}y;q^2)-xj(-qxy;q^2)j(-x^{-1}y;q^2).\label{equation:H1Thm1.1}
\end{equation}

\section{The theorem and the alternate proof}

Motivated by Dyson's rank differences,  Bringmann, Ono, and Rhoades \cite{BrOR} used the theory of harmonic weak Maass forms in order to identify linear combinations of Eulerian series which are weakly holomorphic modular forms.   

We recall the relevant notation from \cite{BrOR}.   Define $K^{\prime} (w;z)$, $K^{\prime\prime}(w;z)$, $H^{\prime} (a,c,w;z)$, by
{\allowdisplaybreaks \begin{align}
K^{\prime} (\omega;z)&:=\sum_{n=0}^{\infty}\frac{(-1)^nq^{n^2}(q;q^2)_n}{(\omega q^2;q^2)_{n}(\omega^{-1} q^2;q^2)_{n}},\\
K^{\prime\prime} (\omega;z)&:=\sum_{n=1}^{\infty}\frac{(-1)^nq^{n^2}(q;q^2)_{n-1}}{(\omega q;q^2)_{n}(\omega^{-1} q;q^2)_{n}},\\
H^{\prime} (a,c,w;z)&:=\sum_{n=0}^{\infty}\frac{q^{\frac{1}{2}n(n+1)}(-q)_n}{(\omega q^{\frac{a}{c}})_{n+1}(\omega q^{1-\frac{a}{c}})_{n+1}},
\end{align}
where $q:=e^{2\pi i z}$ and $0< a < c.$  Further, let $\zeta_c:=e^{2\pi i /c}$ and $f_c:=2c/\gcd(c,4).$  Let
\begin{align}
\tilde{K}(a,c;z):=\frac{1}{4}\textup{csc} \Big ( \pi  \frac{a}{c} \Big )q^{-\frac{1}{8}}K^{\prime}(\zeta_c^a;z)+\textup{sin}\Big(  \pi \frac{a}{c}\Big ) q^{-\frac{1}{8}}K^{\prime\prime}(\zeta_c^a;z),\\
\tilde{H}(a,c;z):=q^{\frac{a}{c}(1-\frac{a}{c})}(H^{\prime}(a,c,1;z)-H^{\prime}(a,c,-1;z)),\label{equation:Htilde}
\end{align}}%
where a sign error has been corrected in (\ref{equation:Htilde}).   One of the main results of \cite{BrOR} reads
\begin{theorem}{\cite[Theorem $1.3$]{BrOR}}\label{theorem:thmBrOR} Let $0<a<c.$   In the notation above, $\tilde{H}(a,c;4f_c^2z)$ is a weight $1/2$ weakly holomorphic modular form on $\Gamma_1(64f_c^4)$ and $\tilde{K}(a,c;2f_c^2z)$ is a weight $1/2$ weakly holomorphic modular form on $\Gamma_1(64f_c^4).$
\end{theorem}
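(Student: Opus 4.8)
The plan is to leave harmonic Maass forms aside and argue entirely with Appell--Lerch sums and theta functions. The one structural input is (\ref{equation:m-change-z}): the mock part of $m(x,q,z)$ does not depend on $z$. Together with (\ref{equation:m-fnq-z}) and (\ref{equation:m-fnq-x}) --- which shift $z$ and $x$ by powers of $q$ --- this shows that a $\C$-linear combination of functions $m(x_i,q,z_i)$ is a quotient of theta functions (up to a constant) as soon as, grouping the $x_i$ into classes modulo $q^{\Z}$, the coefficients within each class sum to zero. The families $\tilde H(a,c;z)$ and $\tilde K(a,c;z)$ are designed to be precisely such combinations; granting this, the conclusion follows from the classical fact that a quotient of theta functions, once its variable is rescaled so that all $q$-exponents are integral, is a weight $1/2$ weakly holomorphic modular form, and only the level needs to be tracked.

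First I would write each Eulerian series in Appell--Lerch form. For $H'(a,c,w;z)$ with $w=\pm1$ one uses the $g(x,q)$-type expansion recalled in the introduction, with $x$ essentially $wq^{a/c}$ and $q$ replaced by the appropriate power, obtaining $H'$ as a short combination of Appell--Lerch sums (at a fixed base, a power of $q$) plus a theta quotient. For $K'(\zeta_c^a;z)$ and $K''(\zeta_c^a;z)$ one uses the Lost Notebook identities of the type (\ref{equation:RLNid2})--(\ref{equation:RLNid4}), specialised to $x$ a root of unity times a power of $q$, together with the theta identities (\ref{equation:1.8})--(\ref{equation:H1Thm1.1}) and the functional equations (\ref{equation:m-fnq-z})--(\ref{equation:m-fnq-x}) to normalise all arguments; should the resulting bases differ, Theorem \ref{theorem:msplit-general-n} is available to split an Appell--Lerch sum into sums at a higher base. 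After this step, $\tilde H$ and $\tilde K$ are displayed as explicit $\C$-combinations of functions $m(x,q,z)$ with arguments of the shape $\pm\zeta_c^{\,b}q^{\,j}$, plus quotients of theta functions $j(\pm\zeta_c^{\,b}q^{\,j};q^{\,m})$, which (with $q=e^{2\pi i z}$, after rescaling) are classical one-variable theta functions.

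The crux is to show that the genuine Appell--Lerch sums cancel in each combination. For $\tilde H=q^{\frac{a}{c}(1-\frac{a}{c})}\bigl(H'(a,c,1;z)-H'(a,c,-1;z)\bigr)$ one subtracts the two expansions, normalises the $x$- and $z$-arguments by (\ref{equation:m-fnq-z})--(\ref{equation:m-fnq-x}), and reads off from (\ref{equation:m-change-z}) that the total of the surviving $m$'s is a quotient of theta functions --- the sign in (\ref{equation:Htilde}) being exactly the one for which this occurs. For $\tilde K$ the same analysis applies to $q^{-\frac18}\bigl(\tfrac14\csc(\pi a/c)\,K'(\zeta_c^a;z)+\sin(\pi a/c)\,K''(\zeta_c^a;z)\bigr)$: after inserting the expansions above, the Appell--Lerch sums contributed by $K'$ and by $K''$ land in a single class modulo $q^{\Z}$, and the two trigonometric coefficients are exactly those that make the total coefficient of that class vanish, so only a theta quotient remains. (The shapes of the resulting theta quotients can be cross-checked against known expansions such as (\ref{equation:f0-conj})--(\ref{equation:mf0-expansion}).) Since a nonzero $m(x,q,z)$ is never a weakly holomorphic modular form, some such cancellation is forced in any proof of Theorem \ref{theorem:thmBrOR}; here it is essentially the entire argument.

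The remaining step is bookkeeping. After the substitutions $z\mapsto 4f_c^{2}z$ and $z\mapsto 2f_c^{2}z$ and multiplication by the stated $q$-powers, every fractional $q$-exponent becomes integral --- one uses $4f_c^{2}\tfrac{a}{c}=16ca/\gcd(c,4)^{2}\in\Z$ to clear the $q^{a/c}$ from $H'$, and the evenness of $f_c$ to clear the $q^{-1/8}$ in $\tilde K$ --- so each factor $j(\zeta_c^{\,b}q^{\,j};q^{\,m})$ is, up to an elementary factor, a weight $1/2$ weakly holomorphic modular form, and tracking the level through the rescaling and the $\Gamma_0$-transformations of these theta factors yields a form on $\Gamma_1(64f_c^{4})$. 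Thus the conceptual content of the proof is just (\ref{equation:m-change-z}), which is why it is short; the real work --- and the step most likely to be delicate --- is locating the correct Lost Notebook expansions in the first step and performing the theta-function manipulations in the crux step that verify the surviving Appell--Lerch coefficients vanish, with the level computation a routine but lengthy afterthought.
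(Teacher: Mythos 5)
Your proposal follows essentially the same route as the paper: both reduce Theorem \ref{theorem:thmBrOR} to the explicit theta-quotient evaluations of Theorem \ref{theorem:new}, obtained by writing $K'$ and $K''$ via (\ref{equation:RLNid2})--(\ref{equation:RLNid4}) and $\tilde H$ via a bilateral Lambert series, and then observing that the genuine Appell--Lerch contributions cancel in the prescribed combinations (for $\tilde K$ because the trigonometric coefficients kill the common $m(-\omega,q,-1)$ term, and for $\tilde H$ because $\zeta_c^{2b}=1$ for both $b=0$ and $b=c/2$, so the two $m$-terms are literally identical and not even (\ref{equation:m-change-z}) is needed there). The only divergences are cosmetic: the paper imports the Lambert-series form of $H$ from \cite[eqs.\ (4.1) and (4.5)]{BrOR} rather than using a $g(x,q)$-type expansion, and it leaves the final level bookkeeping implicit.
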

\noindent  The proof of Theorem \ref{theorem:thmBrOR} is lengthy and detailed and yields no explicit formulas for $\tilde{H}$ and $\tilde{K}$.  Here, we change the context to Appell--Lerch sums and  employ the techniques of \cite{HM} to shorten the proof of Theorem \ref{theorem:thmBrOR} and discuss connections to recent work of Kang \cite{K}.

 \begin{theorem}\label{theorem:new}  Let $0<a<c.$   In the notation above, we have
\begin{equation*}
\tilde{H}(a,c;z)=2q^{\frac{a}{c}(1-\frac{a}{c})}\frac{J_2^3}{J_{1,2}j(q^{\frac{2a}{c}};q^2)}, \ \ \tilde{K}(a,c;z)=-\frac{i\zeta_{c}^{a/2}q^{-\frac{1}{8}}}{2}\frac{J_{1,2}^2}{j(\zeta_c^a;q)}.
\end{equation*}
\end{theorem}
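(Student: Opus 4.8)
The plan is to express each of the basic hypergeometric series $K'$, $K''$, $H'$ as a single Appell--Lerch sum plus an explicit quotient of theta functions, and then to observe that in the two linear combinations defining $\tilde K$ and $\tilde H$ the Appell--Lerch (``mock'') pieces drop out, leaving only theta quotients which one reduces to the stated shapes with the identities of Section 1. Everything needed is already in \cite{HM, Mo1, K}; in particular no new $q$-series identity has to be established. In particular Theorem \ref{theorem:thmBrOR} follows, a quotient of theta functions being weakly holomorphic modular on a suitable congruence subgroup.

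\emph{The form $\tilde K$.} Using $(x;q^2)_{n+1}=(1-x)(xq^2;q^2)_n$, one recognizes
\[
K'(\omega;z)=(1-\omega)\sum_{n\ge0}\frac{(-1)^nq^{n^2}(q;q^2)_n}{(\omega;q^2)_{n+1}(q^2/\omega;q^2)_n},
\]
so (\ref{equation:RLNid2}) with $x=\omega$ gives $K'(\omega;z)=(1-\omega)\bigl(m(-\omega,q,-1)+\tfrac{J_{1,2}^2}{2j(\omega;q)}\bigr)$. Shifting $n\mapsto n+1$ rewrites $K''(\omega;z)$ as $-\sum_{n\ge0}\frac{(-1)^n(q;q^2)_nq^{(n+1)^2}}{(\omega q;q^2)_{n+1}(q/\omega;q^2)_{n+1}}$, so (\ref{equation:RLNid4}) with $x=\omega$ gives $(1-\omega^{-1})K''(\omega;z)=\tfrac{J_{1,2}^2}{2j(\omega;q)}-m(-\omega,q,-1)$. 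Now set $\omega=\zeta_c^a$ and use $1-\zeta_c^a=-2i\zeta_c^{a/2}\sin(\pi a/c)$ and $1-\zeta_c^{-a}=2i\zeta_c^{-a/2}\sin(\pi a/c)$, whence
\[
\tfrac14\csc(\pi a/c)(1-\zeta_c^a)=-\tfrac{i}{2}\zeta_c^{a/2}=\frac{\sin(\pi a/c)}{1-\zeta_c^{-a}}.
\]
Thus in $\tilde K(a,c;z)=\tfrac14\csc(\pi a/c)q^{-1/8}K'(\zeta_c^a;z)+\sin(\pi a/c)q^{-1/8}K''(\zeta_c^a;z)$ the two $m(-\zeta_c^a,q,-1)$ contributions enter with opposite signs and cancel, while the two $\tfrac{J_{1,2}^2}{2j(\zeta_c^a;q)}$ contributions add; collecting the prefactor yields $\tilde K(a,c;z)=-\tfrac{i\zeta_c^{a/2}q^{-1/8}}{2}\cdot\tfrac{J_{1,2}^2}{j(\zeta_c^a;q)}$.

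\emph{The form $\tilde H$.} Since $q^{a/c}q^{1-a/c}=(-q^{a/c})(-q^{1-a/c})=q$, each of $H'(a,c,1;z)$ and $H'(a,c,-1;z)$ is an evaluation of the universal mock theta function $g_2(x,q):=\sum_{n\ge0}\frac{(-q)_nq^{\binom{n+1}{2}}}{(x;q)_{n+1}(q/x;q)_{n+1}}$. One uses the Appell--Lerch expansion of $g_2$ (from \cite{K}, or obtainable by the partial-fraction method of \cite{HM} that produces the expansion of $g(x,q)$ recalled above), which writes $g_2(x,q)$ as Appell--Lerch sums plus a theta quotient. Forming $H'(a,c,1;z)-H'(a,c,-1;z)$, the Appell--Lerch contributions combine --- via the functional equations (\ref{equation:m-fnq-z})--(\ref{equation:m-change-z}) --- into a theta quotient, much as subtracting (\ref{equation:RLNid4}) from (\ref{equation:RLNid2}) turns a combination of $m$'s into the pure quotient $J_{1,2}^2/j(x;q)$. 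The surviving theta expression then collapses, using (\ref{equation:1.8}), (\ref{equation:1.7}), (\ref{equation:1.12}) --- in particular $j(x;q)j(-x;q)=J_{1,2}\,j(x^2;q^2)$ with $x=q^{a/c}$ --- together with the theta product identity (\ref{equation:H1Thm1.1}), to $\tfrac{2J_2^3}{J_{1,2}\,j(q^{2a/c};q^2)}$; restoring the factor $q^{\frac{a}{c}(1-\frac{a}{c})}$ gives the stated formula for $\tilde H(a,c;z)$.

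I expect the $\tilde H$ step to be the main obstacle. The conceptual point --- that the combination $H'(a,c,1;z)-H'(a,c,-1;z)$ kills the mock part --- is clean, but it presupposes a workable closed form for $g_2$, and, more laboriously, the reduction of the surviving theta expression to the compact quotient $J_2^3/\bigl(J_{1,2}\,j(q^{2a/c};q^2)\bigr)$ is a bookkeeping exercise with the $\theta$-identities of Section 1 (shifting and reflecting theta arguments, dissecting products over $q$ versus $q^2$, and the theta product identity (\ref{equation:H1Thm1.1})). By contrast, once (\ref{equation:RLNid2})--(\ref{equation:RLNid4}) are available, the computation of $\tilde K$ is essentially immediate.
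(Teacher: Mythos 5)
Your treatment of $\tilde K$ is correct and coincides with the paper's first proof: rewrite $K'$ and $K''$ so that (\ref{equation:RLNid2}) and (\ref{equation:RLNid4}) apply, then check that with $\omega=\zeta_c^a$ the prefactors $\tfrac14\csc(\pi a/c)(1-\zeta_c^a)$ and $\sin(\pi a/c)/(1-\zeta_c^{-a})$ both equal $-\tfrac{i}{2}\zeta_c^{a/2}$, so the $m(-\zeta_c^a,q,-1)$ terms cancel and the theta terms double. Nothing to add there.

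The $\tilde H$ half, however, is a plan rather than a proof, and the step you yourself flag as ``the main obstacle'' is a genuine gap. You correctly identify $H'(a,c,\pm1;z)=g_2(\pm q^{a/c},q)$, but you never exhibit the closed form for $g_2$, and the mechanism you propose for the cancellation --- that the Appell--Lerch contributions of $g_2(q^{a/c},q)$ and $g_2(-q^{a/c},q)$ combine via (\ref{equation:m-fnq-z})--(\ref{equation:m-change-z}) into a theta quotient --- does not follow from those identities as stated. Any closed form of the shape $g_2(x,q)=c(x)\,m(\ast,q^2,z(x))+\theta(x)$ yields, upon subtracting the $x\mapsto -x$ evaluation, a linear combination of two $m$'s with different $z$'s and different prefactors; identity (\ref{equation:m-change-z}) only converts a \emph{difference with equal coefficients} into a theta quotient, while a sum of two such $m$'s is still mock. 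Whether the mock parts truly cancel depends entirely on sign bookkeeping you have not done; the analogy with subtracting (\ref{equation:RLNid4}) from (\ref{equation:RLNid2}) is not an argument. The paper sidesteps all of this by starting from Bringmann--Ono--Rhoades' own Lambert-series representation $H(a,b,c;z)=\frac{1}{J_{1,2}}\sum_{n}\frac{(-1)^nq^{n+a/c}q^{n(n+1)}}{1-\zeta_c^bq^{n+a/c}}$ (their $(4.1)$) together with their $(4.5)$: rationalizing the denominator splits $H(a,b,c;z)$ into $-q^{a/c-1}m(\zeta_c^{2b}q^{2a/c-1},q^2,q)$, which depends on $b$ only through $\zeta_c^{2b}=1$ for both $b=0$ and $b=c/2$ and hence cancels identically in the difference, plus the quotient $\zeta_c^{-b}J_2^3/\bigl(J_{1,2}j(\zeta_c^{2b}q^{2a/c};q^2)\bigr)$ coming from (\ref{equation:RJTP}), which carries the sign $\zeta_c^{-b}=\pm1$ and therefore doubles. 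In particular none of (\ref{equation:1.8}), (\ref{equation:1.7}), (\ref{equation:1.12}), (\ref{equation:H1Thm1.1}) is needed for $\tilde H$; the target quotient $2J_2^3/\bigl(J_{1,2}j(q^{2a/c};q^2)\bigr)$ appears directly. To repair your write-up, either quote the precise Appell--Lerch expansion of $g_2$ from \cite{K} or \cite{HM} and verify the cancellation explicitly, or replace the $g_2$ detour by the rationalization of the bilateral series as above.
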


For Theorem \ref{theorem:new}, we give two proofs of the explicit expressions for $\tilde{K}$.  The first uses (\ref{equation:RLNid2}) and (\ref{equation:RLNid4}) while the second demonstrates how to use new Appell-Lerch sum properties to go from the Watson-Whipple results of Kang \cite{K} to identities (\ref{equation:RLNid2}) and (\ref{equation:RLNid4}).

\begin{proof} [Proof of Theorem \ref{theorem:new}]
Using (\ref{equation:RLNid2}) and (\ref{equation:RLNid4}), we see
\begin{align*}
K^{\prime} (\omega;z)&=(1-\omega)\Big ( m(-\omega,q,-1)+\frac{J_{1,2}^2}{2j(\omega;q)}\Big ),\\
K^{\prime\prime} (\omega;z)&=\frac{\omega}{1-\omega}\Big (  m(-\omega,q,-1)-\frac{J_{1,2}^2}{2j(\omega;q)} \Big ).
\end{align*}
The explicit form for $\tilde{K}$ is then immediate upon writing sine and cosecant to exponential  form.   This completes the first proof.  

For the second proof, we first note that Kang \cite{K} showed via Watson-Whipple \cite[eq. $(2.5.1)$, p. 43]{GR}:
\begin{align}
\frac{1}{1-\omega}K^{\prime}(\omega;\tau)&=\frac{1}{\overline{J}_{1,4}}\sum_{n=-\infty}^{\infty}\frac{q^{2n^2+n}}{1-\omega q^{2n}},\label{equation:kang1}\\
\Big( 1-\frac{1}{\omega}\Big )K^{\prime \prime}(\omega;\tau)&=-\frac{1}{\overline{J}_{1,4}}\sum_{n=-\infty}^{\infty}\frac{q^{2n^2+3n+1}}{1-\omega q^{2n+1}}\label{equation:kang2}.
\end{align}
In \cite{K}, one adds (\ref{equation:kang1}) and (\ref{equation:kang2}) and uses (\ref{equation:RJTP}).  Identity (\ref{equation:kang1}) is also in \cite{RLN,AM}.   We show how to use Theorem \ref{theorem:msplit-general-n} and elementary theta function properties to obtain (\ref{equation:RLNid2}) from (\ref{equation:kang1}).  The proof for (\ref{equation:RLNid4}) is similar.  We begin with
{\allowdisplaybreaks \begin{align*}
\frac{1}{\overline{J}_{1,4}}\sum_{n=-\infty}^{\infty}\frac{q^{2n^2+n}}{1-\omega q^{2n}}
&=\frac{1}{\overline{J}_{1,4}}\sum_{n=-\infty}^{\infty}\frac{q^{2n^2+n}}{1-\omega^2 q^{4n}}+\frac{\omega}{\overline{J}_{1,4}}\sum_{n=-\infty}^{\infty}\frac{q^{2n^2+3n}}{1-\omega^2 q^{4n}}\\
&=m(-\omega^2q,q^4,-q^3)+\omega q^{-1}m(-\omega^2q^{-1},q^4,-q)\\
&=m(-\omega^2q,q^4,-q)+\omega q^{-1}m(-\omega^2q^{-1},q^4,-q) +\frac{J_{1,2}^2j(-q\omega^2;q^4)}{j(\omega;q)j(-\omega;q)},
\end{align*}}%
where we have used (\ref{equation:1.8}), (\ref{equation:1.7}), (\ref{equation:1.10}), (\ref{equation:1.12}), (\ref{equation:m-fnq-z}), (\ref{equation:m-change-z}) and simplified.  Using Theorem \ref{theorem:msplit-general-n} with $n=2$, $x=-\omega$, $z=-1$, and $z^{\prime}=-q$ and more simplifying we obtain
{\allowdisplaybreaks \begin{align*}
\frac{1}{\overline{J}_{1,4}}\sum_{n=-\infty}^{\infty}\frac{q^{2n^2+n}}{1-\omega q^{2n}}
&=m(-\omega,q,-1) +\frac{J_{1,2}^2j(-q\omega^2;q^4)}{j(\omega;q)j(-\omega;q)}\\
&\ \ \ \ \ -\frac{J_{1,2}^2}{2j(\omega;q)^2j(-\omega;q)}\Big [ j(-\omega^2;q^2)\overline{J}_{1,2}-\omega j(-q\omega^2;q^2)\overline{J}_{0,2}\Big ]\\
&=m(-\omega,q,-1) +\frac{J_{1,2}^2j(-q\omega^2;q^4)}{j(\omega;q)j(-\omega;q)}-\frac{J_{1,2}^2}{2j(-\omega;q)}\\
&=m(-\omega,q,-1) +\frac{J_{1,2}^2}{2j(-\omega;q)j(\omega;q)}\Big [ 2j(-q\omega^2;q^4)-j(\omega;q)\Big ]\\
&=m(-\omega,q,-1) +\frac{J_{1,2}^2}{2j(-\omega;q)j(\omega;q)}\Big [ j(-q\omega^2;q^4)+\omega j(-q^3\omega^2;q^4)\Big ]\\
&=m(-\omega,q,-1) +\frac{J_{1,2}^2}{2j(\omega;q)},
\end{align*}}%
where the second equality follows from (\ref{equation:H1Thm1.1}) and the last two from (\ref{equation:jsplit}).

For $\tilde {H}$, we begin with \cite[$(4.1)$]{BrOR}:
\begin{align*}
H(a,b,c;z):&=\frac{1}{J_{1,2}}\sum_{n=-\infty}^{\infty}\frac{(-1)^nq^{n+\frac{a}{c}}q^{n(n+1)}}{1-\zeta_c^bq^{n+\frac{a}{c}}}\\
&=\frac{q^{\frac{a}{c}}}{J_{1,2}}\sum_{n=-\infty}^{\infty}\frac{(-1)^nq^{n^2+2n}}{1-\zeta_c^{2b}q^{\frac{2a}{c}}q^{2n}}+\frac{\zeta_c^{b}q^{\frac{2a}{c}}}{J_{1,2}}\sum_{n=-\infty}^{\infty}\frac{(-1)^nq^{n^2+3n}}{1-\zeta_c^{2b}q^{\frac{2a}{c}}q^{2n}}\\
&=-q^{\frac{a}{c}-1}m(\zeta_c^{2b}q^{\frac{2a}{c}-1},q^2,q)+\frac{\zeta_c^{-b}J_2^3}{J_{1,2}j(\zeta_c^{2b}q^{\frac{2a}{c}};q^2)},
\end{align*}
where in the last line we used (\ref{equation:mxqz-def}) and (\ref{equation:RJTP}).  The result follows upon recalling \cite[$(4.5)$]{BrOR}
\begin{equation*}
\tilde{H}(a,c;z)=q^{\frac{a}{c}(1-\frac{a}{c})}(H(a,0,c;z)-H(a,c/2,c;z)).\qedhere
\end{equation*}
\end{proof}

%%%%%%%%%%%%%%
%%%%%%%%%%%%%%
%%%%%%%%%%%%%%
\section*{Acknowledgements} We would like to thank Wadim Zudilin and the referee for helpful comments and suggestions.

\end{document}